\newtheorem{theorem}{Theorem}[section]
\newtheorem{lemma}[theorem]{Lemma}
\theoremstyle{definition}
\theoremstyle{remark}
\numberwithin{equation}{section}
\begin{document}
\title[Third Hankel determinant]{The Third Hankel determinant for inverse
coefficients of bounded turning functions }
\author[M. Raza, N. Tuneski]{Mohsan Raza$^{1,\ast }$, Nikola Tuneski$^{2}$}
\address{$^{1}$Department of Mathematics, Government College University
Faisalabad, Pakistan}
\email{mohsan976@yahoo.com}
\address{Faculty of Mechanical Engineering, Ss. Cyril and Methodius
University in Skopje Karpo\v{s} II b.b., 1000 Skopje, Republic of North
Macedonia}
\email{nikola.tuneski@mf.edu.mk}
\keywords{Univalent functions, bounded turning functions, Hankel
determinant, inverse coefficients}
\date{Received June 19, 2025.\\
\indent$^{\ast }$ Corresponding author\\
2020\textit{\ Mathematics Subject Classification. }30C45, 30C50.}

\begin{abstract}
In this paper, we obtain sharp bounds for the third Hankel determinants of
the coefficients of the inverse of bounded turning functions. Thus answering
a negatively to a conjecture recently posed regarding these functions.
Additionally, we offer a positive response for the Hankel determinant
related to a class of bounded turning functions.
\end{abstract}

\maketitle

\section{Introduction}

Let $\mathcal{A}$ denote the class of normalized analytic functions $f$ in
the open unit disc $\mathbb{D}:=\left\{ z:\left\vert z\right\vert <1,\ z\in 
\mathbb{C}
\right\} $ with Taylor expansion 
\begin{equation}
f(z)=z+\overset{\infty }{\sum_{n=2}}a_{n}z^{n},\ \ \ \ z\in \mathbb{D}
\label{1.1}
\end{equation}%
and let $\mathcal{S}$ denote a subclass of functions in $\mathcal{A}$ which
are univalent in $\mathbb{D}$.\ The class $\mathcal{R}$ of bounded turning
functions for $f\in \mathcal{A}$ is analytically defined as%
\begin{equation}
\mathcal{R}=\left\{ f\in \mathcal{A}:{Re}f^{\prime }\left( z\right) >0,\ \ \
z\in \mathbb{D}\right\} \text{.}  \label{bounded turning}
\end{equation}%
The condition ${Re}f^{\prime }\left( z\right) >0$ implies that $\left\vert
argf^{\prime }(z)\right\vert <\pi /2$, and $argf^{\prime }(z)$ is the angle
of rotation of the image of a line segment starting from $z$ under the
mapping $f$, see \cite[Vol 1, p. 101]{good}. Therefore, the functions in
class $\mathcal{R}$ are known as functions of bounded turnings. This class
was first studied by Noshiro \cite{Noshiro}. However, Zmorovi\v{c} \cite{zam}
explored the properties of these functions completely. It was shown in \cite%
{gal} that a function $f\in \mathcal{R}$ and of the form $\left( \ref{1.1}%
\right) $ satisfies the inequality $\left\vert a_{n}\right\vert \leq 2/n$
and equality holds for the function $f_{0}(z)=-z-2e^{-i\theta }\ln
(1-e^{-i\theta }z).$ Since the function $f_{0}\notin \mathcal{S}^{\ast }$,
the class $\mathcal{R}$ is not a subclass of the class of starlike
functions. Similarly, the well-known Koebe function $k(z)=z/(1-e^{-i\theta
}z)^{2}$ does not belongs to the class $\mathcal{R}$. Hence the class of
starlike functions is not a subclass of $\mathcal{R}$. It was told by Prof.
V. A. Zmorovi\v{c} orally during a seminar at the Department of Higher
Mathematics of the Kiev Polytechnic Institute that examples of functions of
class $\mathcal{R}$ can be constructed showing that $\mathcal{S}^{\ast }$
and $\mathcal{R}$ have common elements, but neither is contained in the
other. A one to one correspondence between these classes was given in \cite%
{gal}: If $g(z)\in \mathcal{S}^{\ast }$, then%
\begin{equation*}
f\left( z\right) =z+z\ln \left( \frac{g(z)}{z}\right) -\underset{0}{\overset{%
z}{\int }}\ln \left( \frac{g(s)}{s}\right) ds\in \mathcal{R}
\end{equation*}%
and conversely, if $f\in \mathcal{R}$, then%
\begin{equation*}
g\left( z\right) =z\exp \left( \underset{0}{\overset{z}{\int }}\frac{g(s)-s}{%
s}ds\right) \in \mathcal{S}^{\ast }\text{.}
\end{equation*}%
In 1977, Chichra \cite{Chichra} defined the class consisting of function $%
f\in \mathcal{R}_{1}$ which satisfy the condition 
\begin{equation*}
\mathcal{R}_{1}=\left\{ f\in \mathcal{A}:{Re}\left( f^{\prime }\left(
z\right) +zf^{\prime \prime }(z)\right) >0,\ \ \ z\in \mathbb{D}\right\} 
\text{.}
\end{equation*}%
which is equivalent to ${Re}\left( \left( zf^{\prime }\left( z\right)
\right) ^{\prime }\right) >0,\ z\in \mathbb{D}$. He proved that if $f\in 
\mathcal{R}_{1}$, then $Ref^{\prime }(z)>0$, and hence $f$ is univalent in $%
\mathbb{D}$. Later, Singh and Singh \cite{sigh} showed that if $f\in 
\mathcal{R}_{1}$ then $f$ is starlike in $\mathbb{D}$.

For any univalent function $f$, the inverse function $f^{-1}$ can be
expanded using the Taylor series as 
\begin{equation}
f^{-1}\left( w\right) =z+\overset{\infty }{\sum_{n=2}}A_{n}w^{n}\text{.}
\label{inverse}
\end{equation}%
This expansion is valid at least in the disc $\left\vert w\right\vert \leq
1/4$.

The $q$th Hankel determinant for analytic functions $f\in \mathcal{A}$ is
given by%
\begin{equation*}
H_{q}\left( n\right) (f):=\left\vert 
\begin{array}{llll}
a_{n} & a_{n+1} & \ldots & a_{n+q-1} \\ 
a_{n+1} & a_{n+2} & \ldots & a_{n+q} \\ 
\vdots & \vdots & \ldots & \vdots \\ 
a_{n+q-1} & a_{n+q} & \ldots & a_{n+2q-2}%
\end{array}%
\right\vert \text{,}
\end{equation*}%
where $n\geq 1$ and $q\geq 1$, see \cite{pom}. The Hankel determinant%
\begin{equation}
\left\vert H_{3}\left( 1\right) (f)\right\vert =\left\vert
2a_{2}a_{3}a_{4}-a_{3}^{3}-a_{4}^{2}+a_{3}a_{5}-a_{2}^{2}a_{5}\right\vert
\label{h31}
\end{equation}%
is known as third Hankel determinant. The sharp bounds on $\left\vert
H_{3}\left( 1\right) (f)\right\vert $ for well-known classes such as the
class $\mathcal{S}^{\ast }$ of starlike functions, the class $\mathcal{C}$
of convex functions and the class $\mathcal{R}$ of bounded turning functions
have been found in \cite{Kow,Kow2,Kow1}. The sharp bound of $\left\vert
H_{3}\left( 1\right) (f)\right\vert $ for the inverse coefficients of
starlike functions is an open problem. The sharp bound of $\left\vert
H_{3}\left( 1\right) (f)\right\vert $ for the inverse coefficients of convex
functions is obtained in \cite{Raza}. A non sharp bound for the class $%
\mathcal{R}$ is studied by Gelova and Tuneski \cite{tun}. \bigskip

In \cite{tun}, the authors have conjectured that $|H_{3}(1)(f^{-1})|\leq 1/4$
when $f\in \mathcal{R}$, and $|H_{3}(1)(f^{-1})|\leq 1/64$ for $f\in 
\mathcal{R}_{1}$. In this paper, we found sharp bounds of these results.

Let $\mathcal{P}$ denote the class of analytic functions $p$ defined for $%
z\in \mathbb{D}$ given by 
\begin{equation}
p(z)=1+\sum_{n=1}^{\infty }c_{n}z^{n}  \label{1.2}
\end{equation}%
with positive real part in $\mathbb{D}$.\ In order to prove our theorems, we
will use the following result concerning the functions in the class $%
\mathcal{P}$.

\begin{lemma}
\label{A}\cite[Lemma 2.1]{cho19} Let $\overline{\mathbb{D}}:=\{z\in \mathbb{C%
}:|z|\leq 1\}$ be the closed unit disk, and $p\in \mathcal{P}$ be given by $%
\left( \ref{1.2}\right) $. Then, 
\begin{align*}
& c_{1}=2t_{1}\text{,} \\
& c_{2}=2t_{1}^{2}+2\left( 1-|t_{1}|^{2}\right) t_{2}\text{,} \\
& c_{3}=2t_{1}^{3}+4\left( 1-|t_{1}|^{2}\right) t_{1}t_{2}-2\left(
1-|t_{1}|^{2}\right) \overline{t_{1}}t_{2}^{2}+2\left( 1-|t_{1}|^{2}\right)
\left( 1-|t_{2}|^{2}\right) t_{3}\text{,} \\
& \text{and} \\
& c_{4}=2t_{1}^{4}+2(1-|t_{1}|^{2})\left( 3t_{1}^{2}+\overline{t_{1}}%
^{2}t_{2}^{2}-3|t_{1}|^{2}t_{2}+t_{2}\right) t_{2} \\
& \quad +2(1-|t_{1}|^{2})(1-|t_{2}|^{2})\left( 2t_{1}-2\overline{t_{1}}t_{2}-%
\overline{t_{2}}t_{3}\right) t_{3} \\
& \quad +2(1-|t_{1}|^{2})(1-|t_{2}|^{2})(1-|t_{3}|^{2})t_{4}\text{,}
\end{align*}%
for some $t_{k}\in \overline{\mathbb{D}}$ for $k\in \{1,2,3,4\}$.
\end{lemma}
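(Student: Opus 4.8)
The plan is to prove Lemma \ref{A} by the Schur (Wall) continued-fraction algorithm, the standard device for parametrising the Taylor coefficients of a Carath\'eodory function by a sequence of parameters lying in $\overline{\mathbb{D}}$.

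First I would pass from the class $\mathcal{P}$ to the class of Schwarz functions. Given $p\in\mathcal{P}$ with $p(0)=1$, the function
\[
\omega(z)=\frac{p(z)-1}{p(z)+1}
\]
is holomorphic on $\mathbb{D}$, maps $\mathbb{D}$ into $\mathbb{D}$, and satisfies $\omega(0)=0$; conversely $p=(1+\omega)/(1-\omega)$. So it suffices to parametrise the coefficients of $\omega$ and then translate back by expanding $(1+\omega)/(1-\omega)$ as a power series through the term $z^{4}$.

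The core step is the Schur algorithm applied to $\omega$. By the Schwarz lemma $\phi_{1}(z):=\omega(z)/z$ is a holomorphic self-map of $\overline{\mathbb{D}}$; set $t_{1}=\phi_{1}(0)$. Inductively, given a self-map $\phi_{k}$ of $\overline{\mathbb{D}}$ with $\phi_{k}(0)=t_{k}$, put
\[
\phi_{k+1}(z)=\frac{1}{z}\cdot\frac{\phi_{k}(z)-t_{k}}{1-\overline{t_{k}}\,\phi_{k}(z)},\qquad t_{k+1}=\phi_{k+1}(0);
\]
since the numerator vanishes at $z=0$ and a disc automorphism is applied first, $\phi_{k+1}$ is again a self-map of $\overline{\mathbb{D}}$. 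Iterating four times yields $t_{1},t_{2},t_{3},t_{4}\in\overline{\mathbb{D}}$. If $|t_{k}|=1$ at some stage, $\phi_{k}$ is a unimodular constant and the recursion terminates; the asserted formulas remain valid because each factor $1-|t_{k}|^{2}$ then kills all subsequent contributions, so no case distinction is needed in the statement.

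Finally I would invert the recursion. Solving the displayed relation for $\phi_{k}$ gives
\[
\phi_{k}(z)=\frac{t_{k}+z\,\phi_{k+1}(z)}{1+\overline{t_{k}}\,z\,\phi_{k+1}(z)},
\]
and substituting successively $\phi_{4}\equiv t_{4}$, then $\phi_{3}$, $\phi_{2}$, $\phi_{1}$, and lastly $\omega(z)=z\phi_{1}(z)$, one expands in powers of $z$ through order $z^{4}$ and reads off $c_{1},\dots,c_{4}$ from $p=(1+\omega)/(1-\omega)$. Every individual step is an elementary Taylor expansion of a rational function, so the genuine difficulty is purely the bookkeeping: one must carry the $(1-|t_{k}|^{2})$ factors produced by the Schur denominators and keep the conjugated parameters $\overline{t_{k}}$ in their correct positions. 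I expect the consolidation of the $c_{4}$ formula to be the main obstacle — it is there that the mixed monomials $\overline{t_{1}}^{2}t_{2}^{2}$, $|t_{1}|^{2}t_{2}$, $\overline{t_{1}}t_{2}$ and $\overline{t_{2}}t_{3}$ all arise — and I would organise that computation by grouping the terms according to the index of the last nonzero Schur parameter.
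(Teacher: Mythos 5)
The paper does not prove Lemma \ref{A} at all --- it is quoted verbatim from \cite[Lemma 2.1]{cho19} --- and your Schur-algorithm parametrisation of the associated Schwarz function $\omega=(p-1)/(p+1)$ is exactly the standard derivation underlying that reference, including the correct handling of the degenerate case $|t_{k}|=1$, where the factors $1-|t_{k}|^{2}$ make the formulas valid for arbitrary subsequent parameters. The plan is sound; the only thing separating it from a complete proof is actually carrying out the order-$z^{4}$ expansion and the attendant bookkeeping, which is the sole nontrivial content of the $c_{4}$ formula beyond the classical Libera--Z\l otkiewicz-type expressions for $c_{1},c_{2},c_{3}$.
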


\section{Main Results}

We now prove the main theorems of this paper.

\begin{theorem}
If $f\in \mathcal{S}^{\ast }$ has the form $\left( \ref{1.1}\right) $. Then 
\begin{equation}
|H_{3}(1)\left( f^{-1}\right) |\leq \frac{44}{135}\text{.}  \label{th1}
\end{equation}%
This result is sharp for the function $f_{0}(z)=-z+2{arctanh}(z)$.
\end{theorem}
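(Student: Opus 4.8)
\emph{Proof proposal.} The plan is to convert the determinant into a polynomial in the coefficients of a Carath\'eodory function and then optimise via Lemma~\ref{A}. Writing $f'(z)=p(z)$ with $p(z)=1+c_{1}z+c_{2}z^{2}+\cdots\in\mathcal{P}$, comparing coefficients gives
\[
a_{2}=\frac{c_{1}}{2},\qquad a_{3}=\frac{c_{2}}{3},\qquad a_{4}=\frac{c_{3}}{4},\qquad a_{5}=\frac{c_{4}}{5},
\]
while Lagrange inversion yields the inverse coefficients
\[
A_{2}=-a_{2},\quad A_{3}=2a_{2}^{2}-a_{3},\quad A_{4}=-5a_{2}^{3}+5a_{2}a_{3}-a_{4},\quad A_{5}=14a_{2}^{4}-21a_{2}^{2}a_{3}+6a_{2}a_{4}+3a_{3}^{2}-a_{5}.
\]
Substituting these into $H_{3}(1)(f^{-1})=2A_{2}A_{3}A_{4}-A_{3}^{3}-A_{4}^{2}+A_{3}A_{5}-A_{2}^{2}A_{5}$ expresses it as an explicit polynomial $\Psi(c_{1},c_{2},c_{3},c_{4})$, each of whose monomials $c_{1}^{i_{1}}c_{2}^{i_{2}}c_{3}^{i_{3}}c_{4}^{i_{4}}$ satisfies $i_{1}+2i_{2}+3i_{3}+4i_{4}=6$; hence $\Psi(e^{i\theta}c_{1},e^{2i\theta}c_{2},e^{3i\theta}c_{3},e^{4i\theta}c_{4})=e^{6i\theta}\Psi(c_{1},c_{2},c_{3},c_{4})$. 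Since replacing $f(z)$ by $e^{-i\theta}f(e^{i\theta}z)$ sends $c_{k}$ to $e^{ik\theta}c_{k}$, the quantity $|H_{3}(1)(f^{-1})|$ is rotation invariant and we may assume $c_{1}=c\in[0,2]$.

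Next I would insert the parametrisation of Lemma~\ref{A}, with $t_{1}=c/2=:t\in[0,1]$ and $t_{2},t_{3},t_{4}\in\overline{\mathbb{D}}$, into $\Psi$. Because $c_{4}$ enters $\Psi$ only linearly (solely through $A_{5}$, with constant coefficient $-\tfrac15$), and within $c_{4}$ the variable $t_{4}$ occurs only in the summand $2(1-t^{2})(1-|t_{2}|^{2})(1-|t_{3}|^{2})t_{4}$, the variable $t_{4}$ occurs in $\Psi$ linearly with coefficient a product of $(1-t^{2})(1-|t_{2}|^{2})(1-|t_{3}|^{2})\ge0$ and a function of $t,t_{2},t_{3}$; the triangle inequality then permits replacing $t_{4}$ by $1$. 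The resulting majorant is at most quadratic in $t_{3}$ (since $c_{3}$ enters $\Psi$ quadratically through $A_{4}$, and the $t_{4}$-coefficient just estimated contributes a further $|t_{3}|^{2}$-term), so collecting the $t_{3}^{2}$-, $t_{3}$- and $|t_{3}|^{2}$-contributions, applying the triangle inequality in $t_{3}$, and reducing the remaining modulus of $t_{2}$ by the same device leaves a real function $\Phi(t,x)$ of the two variables $t\in[0,1]$ and $x=|t_{2}|\in[0,1]$. It then remains to prove $\max_{[0,1]^{2}}\Phi=\tfrac{44}{135}$: for $t=0$ one checks that $\Phi(0,\cdot)$ is a one-variable polynomial on $[0,1]$ with maximum $\tfrac{44}{135}$, attained at $x=1$; for $t\in(0,1]$ one shows $\Phi(t,x)<\tfrac{44}{135}$ by analysing the monotonicity of $\Phi(t,\cdot)$ together with the boundary values $x\in\{0,1\}$.

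For sharpness take $f_{0}(z)=-z+2\operatorname{arctanh}(z)=z+\tfrac23z^{3}+\tfrac25z^{5}+\cdots$; then $f_{0}'(z)=\dfrac{1+z^{2}}{1-z^{2}}$ has positive real part, $a_{2}=a_{4}=0$, $a_{3}=\tfrac23$, $a_{5}=\tfrac25$, so $A_{2}=A_{4}=0$, $A_{3}=-\tfrac23$ and $A_{5}=3a_{3}^{2}-a_{5}=\tfrac{14}{15}$, whence
\[
H_{3}(1)(f_{0}^{-1})=-A_{3}^{3}+A_{3}A_{5}=\frac{8}{27}-\frac{28}{45}=-\frac{44}{135}.
\]
This is exactly the boundary configuration $c_{1}=c_{3}=0$, $c_{2}=c_{4}=2$ (i.e. $p(z)=(1+z^{2})/(1-z^{2})$, $t=0$, $|t_{2}|=1$) of the optimisation, so the bound is best possible.

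The step I expect to be the main obstacle is the elimination of $t_{3}$ and the subsequent bivariate maximisation. After the Lemma~\ref{A} substitution the polynomial carries the conjugates $\overline{t_{1}},\overline{t_{2}}$, so $|\Psi|$ does not split term by term; one must group the $t_{3}$-dependent part judiciously — and, where necessary, treat the cases $t=0$ and $t>0$ separately — before applying the triangle inequality, so as not to destroy equality at the extremal point. Establishing $\Phi(t,x)<\tfrac{44}{135}$ for every $t>0$, in particular excluding interior critical points of $\Phi$, is the computational heart of the argument.
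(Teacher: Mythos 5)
Your proposal follows essentially the same route as the paper: pass to $f'=p\in\mathcal{P}$, compute $H_3(1)(f^{-1})$ as a polynomial in $c_1,\dots,c_4$, substitute the parametrisation of Lemma~\ref{A}, eliminate $t_4$ and then $t_3$ by the triangle inequality (the paper handles the delicate grouping you flag by splitting into two cases according to the sign of the resulting $|t_3|^{2}$-coefficient), and maximise the remaining function of $(|t_1|,|t_2|)$ over $[0,1]^2$, where the maximum $2816/8640=44/135$ is attained at $|t_1|=0$, $|t_2|=1$. Your sharpness verification with $f_0(z)=-z+2\operatorname{arctanh}(z)$ is identical to the paper's.
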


\begin{proof}
Let $f\in \mathcal{R}$. Then by \eqref{bounded turning}, we have 
\begin{equation}
f^{\prime }(z)=p(z)\text{,}  \label{fp1}
\end{equation}%
where $p\in \mathcal{P}$. Now using $\left( \ref{1.1}\right) $ and $\left( %
\ref{1.2}\right) $ and comparing the coefficients, we can write 
\begin{equation}
a_{2}=\frac{c_{1}}{2},\ a_{3}=\frac{1}{3}c_{2},\ a_{4}=\frac{1}{4}c_{3},\
a_{5}=\frac{1}{5}c_{4}\text{.}  \label{co4}
\end{equation}%
From $\left( \ref{inverse}\right) $, we have 
\begin{align}
A_{2}& =-a_{2},\ A_{3}=2a_{2}^{2}-a_{3},\ A_{4}=5a_{2}a_{3}-5a_{2}^{3}-a_{4}%
\text{,}  \label{in3} \\
A_{5}& =14a_{2}^{4}-21a_{3}a_{2}^{2}+6a_{2}a_{4}+3a_{3}^{2}-a_{5}\text{.}
\label{in4}
\end{align}%
Putting $\left( \ref{co4}\right) $ in $\left( \ref{in3}\right) $-$\left( \ref%
{in4}\right) $, we obtain%
\begin{align}
A_{2}& =-\frac{c_{1}}{2},\ A_{3}=\frac{1}{2}c_{1}^{2}-\frac{1}{3}c_{2},\
A_{4}=\frac{-5}{8}c_{1}^{3}+\frac{5}{6}c_{1}c_{2}-\frac{1}{4}c_{3}\text{,}
\label{ico3} \\
A_{5}& =\frac{7}{8}c_{1}^{4}-\frac{7}{4}c_{1}^{2}c_{2}+\frac{3}{4}c_{1}c_{3}+%
\frac{1}{3}c_{2}^{2}-\frac{1}{5}c_{4}\text{.}  \label{ico4}
\end{align}%
Using $\left( \ref{ico3}\right) $-$\left( \ref{ico4}\right) $ in $\left( \ref%
{h31}\right) $, we get 
\begin{equation}
H_{3}(1)\left( f^{-1}\right) =\frac{1}{8640}\left[ 
\begin{array}{c}
-540c_{1}^{4}c_{2}-432c_{1}^{2}c_{4}+720c_{1}^{2}c_{2}^{2}+576c_{2}c_{4}-540c_{3}^{2}
\\ 
+720c_{1}c_{2}c_{3}+135c_{1}^{6}-640c_{2}^{3}%
\end{array}%
\right] \text{.}  \label{iH2.1}
\end{equation}%
From Lemma \ref{A} and upon some simplification, we are able to obtain 
\begin{eqnarray*}
8640H_{3}(1)\left( f^{-1}\right) &=&208{t_{{1}}}^{6}{+}16t_{{2}}\left(
1-\left\vert t{_{{1}}}\right\vert ^{2}\right) [-132{t_{{1}}}^{4}{+6}t{_{{1}%
}^{2}}\left( 50-41\left\vert t{_{{1}}}\right\vert ^{2}\right) t_{{2}} \\
&&-4\left( 35\left\vert t{_{{1}}}\right\vert ^{4}{-}61\left\vert t{_{{1}}}%
\right\vert ^{2}+44\right) t{_{{2}}^{2}}+9\overline{t{_{{1}}}}^{2}\left(
1-\left\vert t{_{{1}}}\right\vert ^{2}\right) t{_{{2}}^{3}]} \\
&&-288\left( 1-\left\vert t{_{{1}}}\right\vert ^{2}\right) \left(
1-\left\vert t_{{2}}\right\vert ^{2}\right) [\left( 1-\left\vert t{_{{1}}}%
\right\vert ^{2}\right) \overline{t{_{{1}}}}t{_{{2}}^{2}}-2\left(
3+\left\vert t{_{{1}}}\right\vert ^{2}\right) t{_{{1}}}t_{{2}}+3t{_{{1}}^{3}]%
}t_{{3}} \\
&&-144\left( 1-\left\vert t{_{{1}}}\right\vert ^{2}\right) \left(
1-\left\vert t_{{2}}\right\vert ^{2}\right) [\left( 1-t{_{{1}}^{2}}\right)
\left( \left\vert t_{{2}}\right\vert ^{2}+15\right) -8t{_{{1}}^{2}}\overline{%
t_{{2}}}]t_{{3}}^{2} \\
&&+1152\left( 1-\left\vert t{_{{1}}}\right\vert ^{2}\right) \left(
1-\left\vert t_{{2}}\right\vert ^{2}\right) \left( 1-\left\vert t_{{3}%
}\right\vert ^{2}\right) [2\left( 1-\left\vert t{_{{1}}}\right\vert
^{2}\right) t_{{2}}-t{_{{1}}^{2}]}t_{{4}}
\end{eqnarray*}%
for some $t_{k}\in \overline{\mathbb{D}},\ k=1,2,3,4.$ Now applying triangle
inequality with $\left\vert t_{4}\right\vert \leq 1$, it follows that 
\begin{eqnarray*}
8640\left\vert H_{3}(1)\left( f^{-1}\right) \right\vert &\leq &208\left\vert
t{_{{1}}}\right\vert ^{6}{+}16\left\vert t_{{2}}\right\vert \left(
1-\left\vert t{_{{1}}}\right\vert ^{2}\right) [132\left\vert t{_{{1}}}%
\right\vert ^{4}{+6}\left\vert t{_{{1}}}\right\vert ^{2}\left(
50-41\left\vert t{_{{1}}}\right\vert ^{2}\right) \left\vert t_{{2}%
}\right\vert \\
&&+4\left( 35\left\vert t{_{{1}}}\right\vert ^{4}{-61}\left\vert t{_{{1}}}%
\right\vert ^{2}+44\right) \left\vert t{_{{2}}}\right\vert ^{2}+9\left\vert t%
{_{{1}}}\right\vert ^{2}\left( 1-\left\vert t{_{{1}}}\right\vert ^{2}\right)
\left\vert t{_{{2}}}\right\vert ^{3}{]} \\
&&+288\left( 1-\left\vert t{_{{1}}}\right\vert ^{2}\right) \left(
1-\left\vert t_{{2}}\right\vert ^{2}\right) [\left( 1-\left\vert t{_{{1}}}%
\right\vert ^{2}\right) \left\vert t{_{{1}}}\right\vert \left\vert t{_{{2}}}%
\right\vert ^{2} \\
&&+2\left( 3+\left\vert t{_{{1}}}\right\vert ^{2}\right) \left\vert t{_{{1}}}%
\right\vert \left\vert t_{{2}}\right\vert +3\left\vert t{_{{1}}}\right\vert
^{3}{]}\left\vert t_{{3}}\right\vert \\
&&+144\left( 1-\left\vert t{_{{1}}}\right\vert ^{2}\right) \left(
1-\left\vert t_{{2}}\right\vert ^{2}\right) \left\vert \left( 1-t{_{{1}}^{2}}%
\right) \left( 15+\left\vert t_{{2}}\right\vert ^{2}\right) -8t{_{{1}}^{2}}%
\overline{t_{{2}}}\right\vert \left\vert t_{{3}}\right\vert ^{2} \\
&&+1152\left( 1-\left\vert t{_{{1}}}\right\vert ^{2}\right) \left(
1-\left\vert t_{{2}}\right\vert ^{2}\right) \left( 1-\left\vert t_{{3}%
}\right\vert ^{2}\right) \left\vert 2\left( 1-\left\vert t{_{{1}}}%
\right\vert ^{2}\right) t_{{2}}-t{_{{1}}^{2}}\right\vert \text{.}
\end{eqnarray*}%
Therefore%
\begin{eqnarray}
8640\left\vert H_{3}(1)\left( f^{-1}\right) \right\vert &\leq &208\left\vert
t{_{{1}}}\right\vert ^{6}{+}16\left\vert t_{{2}}\right\vert \left(
1-\left\vert t{_{{1}}}\right\vert ^{2}\right) [132\left\vert t{_{{1}}}%
\right\vert ^{4}{+6}\left\vert t{_{{1}}}\right\vert ^{2}\left(
50-41\left\vert t{_{{1}}}\right\vert ^{2}\right) \left\vert t_{{2}%
}\right\vert  \notag \\
&&+4\left( 35\left\vert t{_{{1}}}\right\vert ^{4}{-61}\left\vert t{_{{1}}}%
\right\vert ^{2}+44\right) \left\vert t{_{{2}}}\right\vert ^{2}+9\left\vert t%
{_{{1}}}\right\vert ^{2}\left( 1-\left\vert t{_{{1}}}\right\vert ^{2}\right)
\left\vert t{_{{2}}}\right\vert ^{3}{]}  \notag \\
&&+288\left( 1-\left\vert t{_{{1}}}\right\vert ^{2}\right) \left(
1-\left\vert t_{{2}}\right\vert ^{2}\right) [\left( 1-\left\vert t{_{{1}}}%
\right\vert ^{2}\right) \left\vert t{_{{1}}}\right\vert \left\vert t{_{{2}}}%
\right\vert ^{2}  \notag \\
&&+2\left( 3+\left\vert t{_{{1}}}\right\vert ^{2}\right) \left\vert t{_{{1}}}%
\right\vert \left\vert t_{{2}}\right\vert +3\left\vert t{_{{1}}}\right\vert
^{3}{]}\left\vert t_{{3}}\right\vert  \notag \\
&&+144\left( 1-\left\vert t{_{{1}}}\right\vert ^{2}\right) \left(
1-\left\vert t_{{2}}\right\vert ^{2}\right) \left\vert \left( 1-t{_{{1}}^{2}}%
\right) \left( 15+\left\vert t_{{2}}\right\vert ^{2}\right) -8t{_{{1}}^{2}}%
\overline{t_{{2}}}\right\vert \left\vert t_{{3}}\right\vert ^{2}  \notag \\
&&-8\left\vert 2\left( 1-\left\vert t{_{{1}}}\right\vert ^{2}\right) t_{{2}%
}-t{_{{1}}^{2}}\right\vert ]\left\vert t_{{3}}\right\vert ^{2}  \notag \\
&&+1152\left( 1-\left\vert t{_{{1}}}\right\vert ^{2}\right) \left(
1-\left\vert t_{{2}}\right\vert ^{2}\right) \left\vert 2\left( 1-\left\vert t%
{_{{1}}}\right\vert ^{2}\right) t_{{2}}-t{_{{1}}^{2}}\right\vert \text{.}
\label{m}
\end{eqnarray}%
\textbf{\ A.} Firstly, assume that 
\begin{equation*}
\left\vert \left( 1-\left\vert t{_{{1}}}\right\vert ^{2}\right) \left(
15+\left\vert t_{{2}}\right\vert ^{2}\right) -8t{_{{1}}^{2}}\overline{t_{{2}}%
}\right\vert -8\left\vert 2\left( 1-\left\vert t{_{{1}}}\right\vert
^{2}\right) t_{{2}}-t{_{{1}}^{2}}\right\vert \geq 0.
\end{equation*}%
Now using the fact that $\left\vert t_{{3}}\right\vert \leq 1$, we can write%
\begin{eqnarray*}
8640\left\vert H_{3}(1)\left( f^{-1}\right) \right\vert &\leq &208\left\vert
t{_{{1}}}\right\vert ^{6}{+}16\left\vert t_{{2}}\right\vert \left(
1-\left\vert t{_{{1}}}\right\vert ^{2}\right) [132\left\vert t{_{{1}}}%
\right\vert ^{4}{+6}\left\vert t{_{{1}}}\right\vert ^{2}\left(
50-41\left\vert t{_{{1}}}\right\vert ^{2}\right) \left\vert t_{{2}%
}\right\vert \\
&&+4\left( 35\left\vert t{_{{1}}}\right\vert ^{4}{-61}\left\vert t{_{{1}}}%
\right\vert ^{2}+44\right) \left\vert t{_{{2}}}\right\vert ^{2}+9\left\vert t%
{_{{1}}}\right\vert ^{2}\left( 1-\left\vert t{_{{1}}}\right\vert ^{2}\right)
\left\vert t{_{{2}}}\right\vert ^{3}{]} \\
&&+288\left( 1-\left\vert t{_{{1}}}\right\vert ^{2}\right) \left(
1-\left\vert t_{{2}}\right\vert ^{2}\right) [\left( 1-\left\vert t{_{{1}}}%
\right\vert ^{2}\right) \left\vert t{_{{1}}}\right\vert \left\vert t{_{{2}}}%
\right\vert ^{2} \\
&&+2\left( 3+\left\vert t{_{{1}}}\right\vert ^{2}\right) \left\vert t{_{{1}}}%
\right\vert \left\vert t_{{2}}\right\vert +3\left\vert t{_{{1}}}\right\vert
^{3}{]} \\
&&+144\left( 1-\left\vert t{_{{1}}}\right\vert ^{2}\right) \left(
1-\left\vert t_{{2}}\right\vert ^{2}\right) \left\vert \left( 1-\left\vert t{%
_{{1}}}\right\vert ^{2}\right) \left( 15+\left\vert t_{{2}}\right\vert
^{2}\right) -8t{_{{1}}^{2}}\overline{t_{{2}}}\right\vert .
\end{eqnarray*}%
This implies that 
\begin{equation*}
\left\vert H_{3}(1)\left( f^{-1}\right) \right\vert \leq \frac{1}{8640}%
g\left( \left\vert t{_{{1}}}\right\vert {,}\left\vert t{_{{2}}}\right\vert
\right) \text{,}
\end{equation*}%
where $g:\mathbb{R}^{2}\rightarrow \mathbb{R}$ is defined by 
\begin{eqnarray*}
g\left( s,u\right) &\leq &208s^{6}{+}16u\left( 1-s^{2}\right) [132s^{4}{+6}%
s^{2}\left( 50-41s^{2}\right) u \\
&&+4\left( 35s^{4}{-}61s^{2}+44\right) u^{2}+9s^{2}\left( 1-s^{2}\right)
u^{3}{]} \\
&&+288s\left( 1-s^{2}\right) \left( 1-u^{2}\right) [\left( 1-s^{2}\right)
u^{2}+2\left( 3+s^{2}\right) u+3s^{2}{]} \\
&&+144\left( 1-s^{2}\right) \left( 1-u^{2}\right) [\left( 1-s^{2}\right)
\left( 15+u^{2}\right) +8s{^{2}}u]\text{.}
\end{eqnarray*}%
We prove that $\max g\left( s,u\right) =2816$ for $\left( s,u\right) \in %
\left[ 0,1\right] \times \left[ 0,1\right] $.

\textbf{I}. On the vertices of $\left[ 0,1\right] \times \left[ 0,1\right] $%
, we have the following cases%
\begin{equation*}
g\left( {0,}0\right) =2160\text{,}\ g(0,1)=2816\text{,}\ g(1,0)=208\text{,}\
g(1,1)=208\text{.}
\end{equation*}%
\textbf{II}. On the sides of $\left[ 0,1\right] \times \left[ 0,1\right] $,
we have

(i) $g(0,u)=2816u^{3}+144(1-u^{2})(15+u^{2})\leq g(0,1)=2816,$ $u\in \left(
0,1\right) $.

(ii) $g(1,u)=208,$ $u\in \left( 0,1\right) $.

(iii) $g(s,0)=208s^{6}+432(1-s^{2})(2s^{3}-5s^{2}+5)\leq g(0,0)=2160,~s\in
\left( 0,1\right) $.

(iv) $g(s,1)=-16(2s^{2}+11)(2s^{4}+13s^{2}-16)\leq g(0,1)=2816,~s\in \left(
0,1\right) .$

\textbf{III. }Now we consider the case when $\left( s,u\right) \in \left(
0,1\right) \times \left( 0,1\right) $.$\ $Therefore 
\begin{eqnarray*}
\frac{\partial g}{\partial s} &=&288(1-s^{2})(-3s^{3}+5s^{2}+3s-1)u^{4} \\
&&-192(70s^{5}-15s^{4}-152s^{3}-18s^{2}+82s+9)u^{3} \\
&&+96(s-1)(246s^{4}+306s^{3}-142s^{2}-187s-3)u^{2} \\
&&-192(66s^{5}+15s^{4}-20s^{3}+18s^{2}-12s-9)u \\
&&+96s(13s^{4}-45s^{3}+90s^{2}+27s-90)\text{,}
\end{eqnarray*}%
and 
\begin{eqnarray*}
\frac{\partial g}{\partial u} &=&576(s-1)^{2}(s+1)^{2}(s^{2}-2s-1)u^{3} \\
&&+192(1-s^{2})(35s^{4}-9s^{3}-79s^{2}-27s+44)u^{2} \\
&&+192(s-1)^{2}(s+1)(41s^{3}+53s^{2}-18s-21)u \\
&&+192s(1-s^{2})(11s^{3}+3s^{2}+6s+9)\text{.}
\end{eqnarray*}%
The system of equations $\frac{\partial g}{\partial s}=0$ and $\frac{%
\partial g}{\partial u}=0$ have the following numerical solutions 
\begin{eqnarray*}
&&\left\{ 
\begin{array}{c}
s_{1}\approx 1.00000, \\ 
u_{1}\approx 1.57568,%
\end{array}%
\right. \ \ \ \ \ \ \ \left\{ 
\begin{array}{c}
s_{2}\approx 0.12412, \\ 
u_{2}\approx 0.44526,%
\end{array}%
\right. \ \ \ \ \ \ \ \left\{ 
\begin{array}{c}
s_{3}\approx 3.288940, \\ 
u_{3}\approx 28.52209,%
\end{array}%
\right. \ \ \ \left\{ 
\begin{array}{c}
s_{4}\approx -1.00000, \\ 
u_{4}\approx 0.560770,%
\end{array}%
\right. \\
&&\left\{ 
\begin{array}{c}
s_{5}\approx -1.37992, \\ 
u_{5}\approx 8.148040,%
\end{array}%
\right. \ \ \ \ \ \left\{ 
\begin{array}{c}
s_{6}\approx 1.000000, \\ 
u_{6}\approx -0.04314,%
\end{array}%
\right. \ \ \ \ \ \left\{ 
\begin{array}{c}
s_{7}\approx 0.827620, \\ 
u_{7}\approx -0.87272,%
\end{array}%
\right. \ \ \left\{ 
\begin{array}{c}
s_{8}\approx 0.785960, \\ 
u_{8}\approx -0.93064,%
\end{array}%
\right. \\
&&\left\{ 
\begin{array}{c}
s_{9}\approx 1.000000, \\ 
u_{9}\approx -1.53255,%
\end{array}%
\right. \ \ \ \ \ \left\{ 
\begin{array}{c}
s_{10}\approx 0, \\ 
u_{10}\approx 0.%
\end{array}%
\right. \ \ \ \ 
\end{eqnarray*}%
The only point in $\left( 0,1\right) \times \left( 0,1\right) $ is $\left(
s_{2},u_{2}\right) $ and we see that $g(s_{2},u_{2})=2043.962299.$ Hence $g$
has no critical point in $\left( 0,1\right) \times \left( 0,1\right) .$%
\vspace{1.5mm}

\textbf{B.} Now we consider the case when 
\begin{equation*}
\left\vert \left( 1-\left\vert t{_{{1}}}\right\vert ^{2}\right) \left(
15+\left\vert t_{{2}}\right\vert ^{2}\right) -8t{_{{1}}^{2}}\overline{t_{{2}}%
}\right\vert -8\left\vert 2\left( 1-\left\vert t{_{{1}}}\right\vert
^{2}\right) t_{{2}}-t{_{{1}}^{2}}\right\vert <0.
\end{equation*}%
Then using the fact that $\left\vert t_{{3}}\right\vert \leq 1$ we obtain
from $\left( \ref{m}\right) $ that 
\begin{equation*}
\left\vert H_{3}(1)\left( f^{-1}\right) \right\vert \leq \frac{1}{8640}%
g_{1}\left( \left\vert t{_{{1}}}\right\vert {,}\left\vert t{_{{2}}}%
\right\vert \right) ,
\end{equation*}%
where 
\begin{eqnarray*}
g_{1}\left( s,u\right) &\leq &208s^{6}{+}16u\left( 1-s^{2}\right) [132s^{4}{%
+6}s^{2}\left( 50-41s^{2}\right) u \\
&&+4\left( 35s^{4}{-}61s^{2}+44\right) u^{2}+9s^{2}\left( 1-s^{2}\right)
u^{3}{]} \\
&&+288s\left( 1-s^{2}\right) \left( 1-u^{2}\right) [\left( 1-s^{2}\right)
u^{2}+2\left( 3+s^{2}\right) u+3s^{2}{]} \\
&&+1152\left( 1-s^{2}\right) \left( 1-u^{2}\right) [2\left( 1-s^{2}\right)
u+s{^{2}]}.
\end{eqnarray*}%
We now show that $\max g_{1}\left( s,u\right) =2816$ when $\left( s,u\right)
\in \left[ 0,1\right] \times \left[ 0,1\right] .$\vspace{1.2mm}

\textbf{I}. On the vertices of $\left[ 0,1\right] \times \left[ 0,1\right] $%
, we have 
\begin{equation*}
g_{1}\left( {0,}0\right) =0,\ \ \ g_{1}(0,1)=2816,\ \ \ g_{1}(1,0)=208,\ \ \
g_{1}(1,1)=208\text{.}
\end{equation*}%
\textbf{II.} On the sides of $\left[ 0,1\right] \times \left[ 0,1\right] $,
we have the following cases

(i) $g_{1}(0,u)=2816u^{3}+2304u(1-u^{2})\leq g_{1}(0,1)=2816,$ $u\in \left(
0,1\right) .$

(ii) $g_{1}(1,u)=208,$ $u\in \left( 0,1\right) .$

(iii) $g_{1}(s,0)=208s^{6}+288s^{2}(1-s^{2})(3s+4)\lesssim
g_{1}(0.7734436204,0)\approx 482.0335053,~s\in \left( 0,1\right) $.

(iv) $g_{1}(s,1)=-16(2x^{2}+11)(2x^{4}+13x^{2}-16)\leq g_{1}(0,1)=2816,~s\in
\left( 0,1\right) $.

\textbf{III.} Now we consider the case for $\left( s,u\right) \in \left(
0,1\right) \times \left( 0,1\right) .$ We find all solutions of system of
nonlinear equations 
\begin{eqnarray*}
\frac{\partial g_{1}}{\partial s} &=&288(s^{2}-1)(3s^{3}-5s^{2}-s+1)u^{4} \\
&&-192(70s^{5}-15s^{4}-80s^{3}-18s^{2}+22s+9)u^{3} \\
&&+96(246s^{5}+60s^{4}-316s^{3}-45s^{2}+76s+3)u^{2} \\
&&-192(66s^{5}+15s^{4}-92s^{3}+18s^{2}+48s-9)u \\
&&+96s(13s^{4}-45s^{3}-48s^{2}+27s+24) \\
&=&0,
\end{eqnarray*}%
and 
\begin{eqnarray*}
\frac{\partial g_{1}}{\partial u} &=&576s(s+1)^{2}(s-1)^{2}(s-2)u^{3} \\
&&+192(1-s^{2})(35s^{4}-9s^{3}-25s^{2}-27s+8)u^{2} \\
&&-192s(1-s^{2})(41s^{3}+12s^{2}-38s-3)u \\
&&+192(1-s^{2})(11s^{4}+3s^{3}-12s^{2}+9s+12) \\
&=&0.
\end{eqnarray*}%
We have the following solutions%
\begin{eqnarray*}
&&\left\{ 
\begin{array}{c}
s_{1}\approx 2.903610, \\ 
u_{1}\approx 32.45641,%
\end{array}%
\right. \ \ \ \ \ \ \ \left\{ 
\begin{array}{c}
s_{2}\approx 1.00000, \\ 
u_{2}\approx 1.69046,%
\end{array}%
\right. \ \ \ \ \ \ \ \left\{ 
\begin{array}{c}
s_{3}\approx -1.36137, \\ 
u_{3}\approx 11.74973,%
\end{array}%
\right. \ \ \ \ \ \ \left\{ 
\begin{array}{c}
s_{4}\approx -1.00000, \\ 
u_{4}\approx 0.44619,%
\end{array}%
\right.  \\
&&\left\{ 
\begin{array}{c}
s_{5}\approx -5.49327, \\ 
u_{5}\approx 0.701860,%
\end{array}%
\right. \ \ \ \ \ \ \ \left\{ 
\begin{array}{c}
s_{6}\approx -4.52812, \\ 
u_{6}\approx 0.481570,%
\end{array}%
\right. \ \ \ \ \ \left\{ 
\begin{array}{c}
s_{7}\approx -0.50175, \\ 
u_{7}\approx -6.25137,%
\end{array}%
\right. \ \ \ \ \ \left\{ 
\begin{array}{c}
s_{8}\approx 1.000000, \\ 
u_{8}\approx -0.29888,%
\end{array}%
\right.  \\
&&\left\{ 
\begin{array}{c}
s_{9}\approx 1.000000, \\ 
u_{9}\approx -2.39158.%
\end{array}%
\right. 
\end{eqnarray*}%
This shows that there is no point of maxima in $\left( 0,1\right) \times
\left( 0,1\right) .$ All cases have been dealt with, $\left( \ref{th1}%
\right) $ holds. 

To see that $\left( \ref{th1}\right) $ is sharp, consider $f_{0}:\mathbb{D}%
\rightarrow 
\mathbb{C}
$ given by 
\begin{equation*}
f_{0}\left( z\right) =-z+2{arctanh}(z)=z+\frac{2}{3}z^{3}+\frac{2}{5}%
z^{5}+\cdots .
\end{equation*}%
It implies that $f_{0}^{\prime }\left( z\right) =\frac{1+z^{2}}{1-z^{2}}.$
Therefore $f_{0}\in \mathcal{R}$ with $a_{2}=a_{4}=0$ and $a_{3}=\dfrac{2}{3}%
,\ a_{5}=\dfrac{2}{5}$ which from $\left( \ref{in3}\right) -\left( \ref{in4}%
\right) $ gives $A_{2}=A_{4}=0$ and $A_{3}=-\dfrac{2}{3},\ A_{5}=\frac{14}{15%
}.$ Hence from $\left( \ref{h31}\right) ,$ we obtain $|H_{3}(1)\left(
f_{0}^{-1}\right) |=\frac{44}{135}.$ This completes the proof of the theorem.
\end{proof}

\begin{theorem}
Let $f\in \mathcal{R}_{1}$ has the form $\left( \ref{1.1}\right) $. Then 
\begin{equation}
|H_{3}(1)\left( f^{-1}\right) |\leq \frac{1}{64}.  \label{H3.0}
\end{equation}%
The result is sharp for the function $f_{\ast }$ given by%
\begin{equation*}
\left( zf_{\ast }^{\prime }\left( z\right) \right) ^{\prime }=\frac{1+z^{3}}{%
1-z^{3}}.
\end{equation*}
\end{theorem}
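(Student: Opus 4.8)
The plan is to follow the scheme of the proof of Theorem~2.1, starting instead from the defining relation of $\mathcal{R}_1$. Since $f\in\mathcal{R}_1$ is equivalent to $\operatorname{Re}\bigl((zf'(z))'\bigr)>0$, we may write $(zf'(z))'=p(z)$ for some $p\in\mathcal{P}$. Comparing the coefficients of \eqref{1.1} and \eqref{1.2} then gives $a_2=\tfrac14 c_1$, $a_3=\tfrac19 c_2$, $a_4=\tfrac1{16}c_3$, $a_5=\tfrac1{25}c_4$. Substituting these into the inverse-coefficient formulas \eqref{in3}--\eqref{in4} expresses $A_2,\dots,A_5$ as polynomials in $c_1,\dots,c_4$, and plugging the result into \eqref{h31} presents $H_3(1)(f^{-1})$ as an explicit polynomial in $c_1,\dots,c_4$ with a fixed integer normalising denominator. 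I would then substitute the parametrisation of Lemma~\ref{A} to rewrite this polynomial in the variables $t_1,t_2,t_3,t_4\in\overline{\mathbb D}$.

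From this point the argument mirrors Theorem~2.1. I would apply the triangle inequality, first with $|t_4|\le 1$; the resulting bound depends on $t_3$ only through a term linear in $|t_3|$ and a term quadratic in $|t_3|$ whose coefficient is a nonnegative factor times a difference of two moduli. Splitting into the two cases according to the sign of that difference, and then using $|t_3|\le 1$, reduces the estimate in each case to maximising a two-variable polynomial, say $g(s,u)$ and $g_1(s,u)$ with $s=|t_1|$ and $u=|t_2|$, over the square $[0,1]^2$. For each of $g$ and $g_1$ one checks the four vertices, the four edges (each a one-variable polynomial optimisation), and the interior critical points determined by the gradient system $\partial g/\partial s=\partial g/\partial u=0$ (respectively for $g_1$); as in Theorem~2.1 I expect that no admissible critical point lies in the open square $(0,1)^2$, so that the maximum is attained on the boundary. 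The crucial feature here is that, because of the strong $1/(n+1)^2$ damping of $a_2$ and $a_3$, the maximum is attained at $(s,u)=(0,0)$, i.e.\ at $c_1=c_2=0$: there $A_2=A_3=0$, $c_3=2t_3$, $A_4=-t_3/8$, $A_5=-\tfrac{2}{25}(1-|t_3|^2)t_4$, and hence \eqref{h31} gives $|H_3(1)(f^{-1})|=|A_4|^2=|t_3|^2/64\le\tfrac1{64}$, which is precisely \eqref{H3.0}.

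For sharpness, take $f_*$ with $(zf_*'(z))'=(1+z^3)/(1-z^3)=1+2z^3+2z^6+\cdots$, so that $c_1=c_2=0$ and $c_3=2$. Then $a_2=a_3=a_5=0$ and $a_4=\tfrac18$, whence by \eqref{in3}--\eqref{in4} one gets $A_2=A_3=A_5=0$ and $A_4=-\tfrac18$, and \eqref{h31} yields $|H_3(1)(f_*^{-1})|=|A_4|^2=\tfrac1{64}$; thus the bound is attained.

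I expect the main obstacle to be the same as in Theorem~2.1: converting the chain of triangle-inequality estimates into a clean finite optimisation problem, and then verifying case by case that every real solution of the resulting polynomial gradient system either lies outside $(0,1)^2$ or gives a value strictly below the claimed maximum. The symbolic bookkeeping in passing from \eqref{h31} to the polynomial in the $c_i$, and then to the polynomial in the $t_i$, is also substantial; the compensating simplification is that the $1/(n+1)^2$ weights make the dominant contributions come from $c_3$ and $c_4$, which should keep $g$ and $g_1$ well behaved and push their maxima onto the boundary of the square.
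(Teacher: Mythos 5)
Your proposal follows essentially the same route as the paper: writing $(zf'(z))'=p(z)$ to get $a_n=c_{n-1}/n^2$, expressing $H_3(1)(f^{-1})$ as a polynomial in $c_1,\dots,c_4$, substituting the parametrisation of Lemma~\ref{A}, applying the triangle inequality with $|t_4|\le 1$, splitting on the sign of the difference of moduli multiplying $|t_3|^2$, and reducing each case to maximising a two-variable polynomial over $[0,1]^2$ by checking vertices, edges and interior critical points; your identification of the extremal configuration $c_1=c_2=0$, $c_3=2$ and the sharpness example agree with the paper's. The only minor imprecision is that the maximum sits at $(s,u)=(0,0)$ only in the first case (the second case's bounding function vanishes there but stays below the same value), which does not affect the argument.
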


\begin{proof}
Using $\left( \ref{ico3}\right) -\left( \ref{ico4}\right) $ in $\left( \ref%
{h31}\right) $, we obtain 
\begin{eqnarray}
H_{3}(1)\left( f^{-1}\right) &=&\frac{1}{74649600}%
[-97200c_{1}^{4}c_{2}-186624c_{1}^{2}c_{4}+172800c_{1}^{2}c_{2}^{2}+331776c_{2}c_{4}-291600c_{3}^{2}
\notag \\
&&+18225c_{1}^{6}+259200c_{1}c_{2}c_{3}-204800c_{2}^{3}].  \label{H3.1}
\end{eqnarray}%
Using Lemma \ref{A}, in $\left( \ref{H3.1}\right) $, we obtain 
\begin{eqnarray}
74649600H_{3}(1)\left( f^{-1}\right) &=&-76288{t_{{1}}}^{6}{+}64t_{{2}%
}\left( 1-\left\vert t{_{{1}}}\right\vert ^{2}\right) [-1740{t_{{1}}}^{4}{+6}%
t{_{{1}}^{2}}\left( 2986-1447\left\vert t{_{{1}}}\right\vert ^{2}\right) t_{{%
2}}  \notag \\
&&-4\left( 1621\left\vert t{_{{1}}}\right\vert ^{4}-2189\left\vert t{_{{1}}}%
\right\vert ^{2}+1216\right) t{_{{2}}^{2}}+2511\overline{t{_{{1}}}}%
^{2}\left( 1-\left\vert t{_{{1}}}\right\vert ^{2}\right) t{_{{2}}^{3}]} 
\notag \\
&&-10368\left( 1-\left\vert t{_{{1}}}\right\vert ^{2}\right) \left(
1-\left\vert t_{{2}}\right\vert ^{2}\right) [31\left( 1-\left\vert t{_{{1}}}%
\right\vert ^{2}\right) \overline{t{_{{1}}}}t{_{{2}}^{2}}  \notag \\
&&-2\left( 3+13\left\vert t{_{{1}}}\right\vert ^{2}\right) t{_{{1}}}t_{{2}%
}+57t{_{{1}}^{3}]}t_{{3}}  \notag \\
&&-5184\left( 1-\left\vert t{_{{1}}}\right\vert ^{2}\right) \left(
1-\left\vert t_{{2}}\right\vert ^{2}\right) [\left( 1-t{_{{1}}^{2}}\right)
\left( 31\left\vert t_{{2}}\right\vert ^{2}+225\right) -32t{_{{1}}^{2}}%
\overline{t_{{2}}}]t_{{3}}^{2}  \notag \\
&&+165888\left( 1-\left\vert t{_{{1}}}\right\vert ^{2}\right) \left(
1-\left\vert t_{{2}}\right\vert ^{2}\right) \left( 1-\left\vert t_{{3}%
}\right\vert ^{2}\right)  \notag \\
&&\lbrack 8\left( 1-\left\vert t{_{{1}}}\right\vert ^{2}\right) t_{{2}}-t{_{{%
1}}^{2}]}t_{{4}}  \label{H3.2}
\end{eqnarray}%
for some $t_{1}\in \lbrack 0,1]$ and $t_{2},t_{3},t_{4}\in \overline{\mathbb{%
D}}$. Since $|t_{4}|\leq 1$, using the triangle inequality in $\left( \ref%
{H3.2}\right) $, we obtain 
\begin{eqnarray*}
74649600H_{3}(1)\left( f^{-1}\right) &=&76288\left\vert {t_{{1}}}\right\vert
^{6}{+}64\left\vert t_{{2}}\right\vert \left( 1-\left\vert t{_{{1}}}%
\right\vert ^{2}\right) [1740\left\vert {t_{{1}}}\right\vert ^{4}{+6}%
\left\vert t{_{{1}}}\right\vert ^{2}\left( 2986-1447\left\vert t{_{{1}}}%
\right\vert ^{2}\right) \left\vert t_{{2}}\right\vert \\
&&+4\left( 1621\left\vert t{_{{1}}}\right\vert ^{4}-2189\left\vert t{_{{1}}}%
\right\vert ^{2}+1216\right) \left\vert t{_{{2}}}\right\vert
^{2}+2511\left\vert t{_{{1}}}\right\vert ^{2}\left( 1-\left\vert t{_{{1}}}%
\right\vert ^{2}\right) \left\vert t{_{{2}}}\right\vert ^{3}{]} \\
&&+10368\left( 1-\left\vert t{_{{1}}}\right\vert ^{2}\right) \left(
1-\left\vert t_{{2}}\right\vert ^{2}\right) [31\left( 1-\left\vert t{_{{1}}}%
\right\vert ^{2}\right) \left\vert t{_{{1}}}\right\vert \left\vert t{_{{2}}}%
\right\vert ^{2} \\
&&+2\left( 3+13\left\vert t{_{{1}}}\right\vert ^{2}\right) \left\vert t{_{{1}%
}}\right\vert \left\vert t_{{2}}\right\vert +57\left\vert t{_{{1}}}%
\right\vert ^{3}{]}\left\vert t_{{3}}\right\vert \\
&&+5184\left( 1-\left\vert t{_{{1}}}\right\vert ^{2}\right) \left(
1-\left\vert t_{{2}}\right\vert ^{2}\right) \left\vert \left( 1-t{_{{1}}^{2}}%
\right) \left( 31\left\vert t_{{2}}\right\vert ^{2}+225\right) -32t{_{{1}%
}^{2}}\overline{t_{{2}}}]\right\vert \left\vert t_{{3}}\right\vert ^{2} \\
&&+165888\left( 1-\left\vert t{_{{1}}}\right\vert ^{2}\right) \left(
1-\left\vert t_{{2}}\right\vert ^{2}\right) \left( 1-\left\vert t_{{3}%
}\right\vert ^{2}\right) \left\vert 8\left( 1-\left\vert t{_{{1}}}%
\right\vert ^{2}\right) t_{{2}}-t{_{{1}}^{2}}\right\vert
\end{eqnarray*}%
Therefore%
\begin{eqnarray}
74649600H_{3}(1)\left( f^{-1}\right) &=&76288\left\vert {t_{{1}}}\right\vert
^{6}{+}64\left\vert t_{{2}}\right\vert \left( 1-\left\vert t{_{{1}}}%
\right\vert ^{2}\right) [1740\left\vert {t_{{1}}}\right\vert ^{4}{+6}%
\left\vert t{_{{1}}}\right\vert ^{2}\left( 2986-1447\left\vert t{_{{1}}}%
\right\vert ^{2}\right) \left\vert t_{{2}}\right\vert  \notag \\
&&+4\left( 1621\left\vert t{_{{1}}}\right\vert ^{4}-2189\left\vert t{_{{1}}}%
\right\vert ^{2}+1216\right) \left\vert t{_{{2}}}\right\vert
^{2}+2511\left\vert t{_{{1}}}\right\vert ^{2}\left( 1-\left\vert t{_{{1}}}%
\right\vert ^{2}\right) \left\vert t{_{{2}}}\right\vert ^{3}{]}  \notag \\
&&+10368\left( 1-\left\vert t{_{{1}}}\right\vert ^{2}\right) \left(
1-\left\vert t_{{2}}\right\vert ^{2}\right) [31\left( 1-\left\vert t{_{{1}}}%
\right\vert ^{2}\right) \left\vert t{_{{1}}}\right\vert \left\vert t{_{{2}}}%
\right\vert ^{2}  \notag \\
&&+2\left( 3+13\left\vert t{_{{1}}}\right\vert ^{2}\right) \left\vert t{_{{1}%
}}\right\vert \left\vert t_{{2}}\right\vert +57\left\vert t{_{{1}}}%
\right\vert ^{3}{]}\left\vert t_{{3}}\right\vert  \notag \\
&&+5184\left( 1-\left\vert t{_{{1}}}\right\vert ^{2}\right) \left(
1-\left\vert t_{{2}}\right\vert ^{2}\right) [\left\vert \left( 1-\left\vert t%
{_{{1}}}\right\vert ^{2}\right) \left( 31\left\vert t_{{2}}\right\vert
^{2}+225\right) -32t{_{{1}}^{2}}\overline{t_{{2}}}]\right\vert  \notag \\
&&-32\left\vert 8\left( 1-\left\vert t{_{{1}}}\right\vert ^{2}\right) t_{{2}%
}-t{_{{1}}^{2}}\right\vert ]\left\vert t_{{3}}\right\vert ^{2}  \notag \\
&&+165888\left( 1-\left\vert t{_{{1}}}\right\vert ^{2}\right) \left(
1-\left\vert t_{{2}}\right\vert ^{2}\right) \left\vert 8\left( 1-\left\vert t%
{_{{1}}}\right\vert ^{2}\right) t_{{2}}-t{_{{1}}^{2}}\right\vert .
\label{H3.3}
\end{eqnarray}%
\textbf{A}. Suppose that 
\begin{equation*}
\left\vert \left( 1-t{_{{1}}^{2}}\right) \left( 31\left\vert t_{{2}%
}\right\vert ^{2}+225\right) -32t{_{{1}}^{2}}\overline{t_{{2}}}]\right\vert
-32\left\vert 8\left( 1-\left\vert t{_{{1}}}\right\vert ^{2}\right) t_{{2}}-t%
{_{{1}}^{2}}\right\vert \geq 0.
\end{equation*}%
Then, since $|t_{3}|\leq 1$, we obtain, from $\left( \ref{H3.3}\right) $
that 
\begin{equation*}
|H_{3}(1)\left( f^{-1}\right) |\leq \frac{1}{74649600}h(\left\vert
t_{1}\right\vert ,|t_{2}|),
\end{equation*}%
where $h:\mathbb{R}^{2}\rightarrow \mathbb{R}$ is defined by 
\begin{eqnarray}
h(s,u) &=&76288s^{6}{+}64u\left( 1-s^{2}\right) [1740s^{4}{+6}s^{2}\left(
2986-1447s^{2}\right) u  \notag \\
&&+4\left( 1621s^{4}-2189s^{2}+1216\right) u^{2}+2511s^{2}\left(
1-s^{2}\right) u^{3}{]}  \notag \\
&&+10368\left( 1-s^{2}\right) \left( 1-u^{2}\right) [31\left( 1-s^{2}\right)
su^{2}+2\left( 3+13s^{2}\right) su+57s^{3}{]}  \notag \\
&&+5184\left( 1-s^{2}\right) \left( 1-u^{2}\right) [\left( 1-s^{2}\right)
\left( 31u^{2}+225\right) +32s^{2}u].  \notag
\end{eqnarray}

We show that $|h(s,u)|\leq 1166400$ for $(s,u)\in \lbrack 0,1]\times \lbrack
0,1]$.\newline
\textbf{I.} On the vertices of $[0,1]\times \lbrack 0,1]$, we have 
\begin{equation*}
h(0,0)=1166400,\ h(0,1)=311296,\ h(1,0)=76288,\ h(1,1)=76288.
\end{equation*}%
\newline
\textbf{II}. On the sides of $\left[ 0,1\right] \times \left[ 0,1\right] $,
we have the following cases

(i) $h(0,u)=311296u^{3}+518(1-u^{2})(225+31u^{2})\leq h(0,0)=1166400,$ $u\in
\left( 0,1\right) .$\vspace{1mm}

(ii) $h(1,u)=76288,$ $u\in \left( 0,1\right) .$\vspace{1mm}

(iii) $h(s,0)=76288s^{6}+15552(1-s^{2})(38s^{3}-75s^{2}+75)\leq
h(0,0)=1166400,~s\in \left( 0,1\right) .$

(iv) $h(s,1)=266304s^{6}-936960s^{4}+435648s^{2}+311296\lesssim
h(0,0.51157)\simeq 365908.5766,~s\in \left( 0,1\right) .$

\textbf{III. }Now we consider the case when $\left( s,u\right) \in \left(
0,1\right) \times \left( 0,1\right) .\ $Therefore 
\begin{eqnarray*}
\frac{\partial h}{\partial s} &=&321408(s^{2}-1)(3s^{3}-5s^{2}-3s+1)u^{4} \\
&&-768(3242s^{5}-1755s^{4}-5944s^{3}+810s^{2}+2702s+81)u^{3} \\
&&+384(s-1)(8682s^{4}+20562s^{3}-7646s^{2}-17285s-837)u^{2} \\
&&-768(+870s^{5}+1755s^{4}+284s^{3}-810s^{2}-432s-81)u \\
&&+384s(1192s^{4}-7695s^{3}+12150s^{2}+4617s-12150),
\end{eqnarray*}%
and 
\begin{eqnarray*}
\frac{\partial h}{\partial u} &=&642816(s+1)^{2}(s-1)^{2}(s^{2}-2s-1)u^{3} \\
&&+768(1-s^{2})(1621s^{4}-1053s^{3}-2837s^{2}-243s+1216)u^{2} \\
&&+768(1+s)(s-1)^{2}(1447s^{3}+3823s^{2}-1782s-2619)u \\
&&+768s(1-s^{2})(145s^{3}+351s^{2}+216s+81).
\end{eqnarray*}%
Thus the system of equations $\frac{\partial h}{\partial s}=0$ and $\frac{%
\partial h}{\partial u}=0$ have the following numerical solutions 
\begin{eqnarray*}
&&\left\{ 
\begin{array}{c}
s_{1}\approx 1.41212, \\ 
u_{1}\approx 1.41974,%
\end{array}%
\right. \ \ \ \ \ \ \ \left\{ 
\begin{array}{c}
s_{2}\approx 1.00000, \\ 
u_{2}\approx 1.58833,%
\end{array}%
\right. \ \ \ \ \ \ \ \left\{ 
\begin{array}{c}
s_{3}\approx 5.05013, \\ 
u_{3}\approx 0.14780,%
\end{array}%
\right. \ \ \ \ \ \ \ \left\{ 
\begin{array}{c}
s_{4}\approx 0, \\ 
u_{4}\approx 0,%
\end{array}%
\right. \\
&&\left\{ 
\begin{array}{c}
s_{5}\approx -0.55944, \\ 
u_{5}\approx 1.021120,%
\end{array}%
\right. \ \ \ \ \ \left\{ 
\begin{array}{c}
s_{6}\approx -1.00000, \\ 
u_{6}\approx 0.774050,%
\end{array}%
\right. \ \ \ \ \ \left\{ 
\begin{array}{c}
s_{7}\approx -0.90172, \\ 
u_{7}\approx -0.02948,%
\end{array}%
\right. \ \ \ \ \ \left\{ 
\begin{array}{c}
s_{8}\approx -2.15794, \\ 
u_{8}\approx 1.743330,%
\end{array}%
\right. \\
&&\left\{ 
\begin{array}{c}
s_{9}\approx -1.00000, \\ 
u_{9}\approx -0.93984,%
\end{array}%
\right. \ \ \ \ \ \left\{ 
\begin{array}{c}
s_{10}\approx -0.73611, \\ 
u_{10}\approx -1.01418,%
\end{array}%
\right. \ \ \ \left\{ 
\begin{array}{c}
s_{11}\approx -1.08810, \\ 
u_{11}\approx -1.75309,%
\end{array}%
\right. \ \ \ \ \left\{ 
\begin{array}{c}
s_{12}\approx -1.00000, \\ 
u_{12}\approx -3.39671,%
\end{array}%
\right. \  \\
&&\left\{ 
\begin{array}{c}
s_{13}\approx 1.011050, \\ 
u_{13}\approx -0.78102,%
\end{array}%
\right. \ \ \ \ \left\{ 
\begin{array}{c}
s_{14}\approx 0.866840, \\ 
u_{14}\approx -1.26993.%
\end{array}%
\right. \ \ \ \ 
\end{eqnarray*}%
The shows that there is no point of maxima in $\left( 0,1\right) \times
\left( 0,1\right) .$ Hence $h$ has no critical point in $\left( 0,1\right)
\times \left( 0,1\right) .$\vspace{1.5mm}

\textbf{B.} Now we consider the case when 
\begin{equation*}
\left\vert \left( 1-t{_{{1}}^{2}}\right) \left( 31\left\vert t_{{2}%
}\right\vert ^{2}+225\right) -32t{_{{1}}^{2}}\overline{t_{{2}}}]\right\vert
-32\left\vert 8\left( 1-\left\vert t{_{{1}}}\right\vert ^{2}\right) t_{{2}}-t%
{_{{1}}^{2}}\right\vert <0.
\end{equation*}%
Then using the fact that $\left\vert t_{{3}}\right\vert \leq 1$ we obtain
from $\left( \ref{H3.3}\right) $ that 
\begin{equation*}
\left\vert H_{3}(1)\left( f^{-1}\right) \right\vert \leq \frac{1}{8640}%
h_{1}\left( \left\vert t{_{{1}}}\right\vert {,}\left\vert t{_{{2}}}%
\right\vert \right) ,
\end{equation*}%
where 
\begin{eqnarray*}
h_{1}(s,u) &=&76288s^{6}{+}64u\left( 1-s^{2}\right) [1740s^{4}{+6}%
s^{2}\left( 2986-1447s^{2}\right) u \\
&&+4\left( 1621s^{4}-2189s^{2}+1216\right) u^{2}+2511s^{2}\left(
1-s^{2}\right) u^{3}{]} \\
&&+10368\left( 1-s^{2}\right) \left( 1-u^{2}\right) [31\left( 1-s^{2}\right)
su^{2}+2\left( 3+13s^{2}\right) su+57s^{3}{]} \\
&&+165888\left( 1-s^{2}\right) \left( 1-u^{2}\right) \left[ 8\left(
1-s^{2}\right) u+s^{2}\right] \text{.}
\end{eqnarray*}%
We now show that $\max h_{1}\left( s,u\right) <1166400$ when $\left(
s,u\right) \in \left[ 0,1\right] \times \left[ 0,1\right] .$\vspace{1.2mm}

\textbf{I}. On the vertices of $\left[ 0,1\right] \times \left[ 0,1\right] $%
, we have 
\begin{equation*}
h_{1}\left( {0,}0\right) =0,\ \ \ h_{1}(0,1)=311296,\ \ \ h_{1}(1,0)=76288,\
\ \ h_{1}(1,1)=76288\text{.}
\end{equation*}%
\textbf{II.} On the sides of $\left[ 0,1\right] \times \left[ 0,1\right] $,
we have the following cases

(i) $h_{1}(0,u)=311296u^{3}+132710u(1-u^{2})\leq h_{1}(0,\frac{3}{62}\sqrt{%
186})=\frac{1327104}{31}\sqrt{186},$ $u\in \left( 0,1\right) $.

(ii) $h_{1}(1,u)=76288,$ $u\in \left( 0,1\right) $.

(iii) $h_{1}(s,0)=76288s^{6}+10368s^{2}(1-s^{2})(57s+16)\lesssim
h_{1}(.8000079381,0)\approx 167147.7331,~s\in \left( 0,1\right) $.

(iv) $h_{1}(s,1)=266304s^{6}-936960s^{4}+435648s^{2}+311296\leq
h_{1}(0,0.51157)\approx 365908.5766,~s\in \left( 0,1\right) $.

\textbf{III.} Now we consider the case for $\left( s,u\right) \in \left(
0,1\right) \times \left( 0,1\right) $. We find all solutions of system of
nonlinear equations 
\begin{eqnarray*}
\frac{\partial h_{1}}{\partial s} &=&321408(s^{2}-1)(3s^{3}-5s^{2}-s+1)u^{4}
\\
&&-768(3242s^{5}-1755s^{4}+1832s^{3}+810s^{2}-4642s+81)u^{3} \\
&&384(8682s^{5}+11880s^{4}-16004s^{3}-9639s^{2}+5108s+837)u^{2} \\
&&-768(870s^{5}+1755s^{4}-7492s^{3}-810s^{2}+6912s-81)u \\
&&+384s(1192s^{4}-7695s^{3}-1728s^{2}+4617s+864) \\
&=&0\text{,}
\end{eqnarray*}%
and 
\begin{eqnarray*}
\frac{\partial h_{1}}{\partial u} &=&642816(s+1)^{2}(s-1)^{2}s(s-2)u^{3} \\
&&+768(1-s^{2})(1621s^{4}-1053s^{3}+2995s^{2}-243s-3968)u^{2} \\
&&+768s(1-s^{2})(1447s^{3}+2376s^{2}-2554s-837)u \\
&&+768(1-s^{2}))(145s^{4}+351s^{3}-1728s^{2}+81s+1728) \\
&=&0\text{.}
\end{eqnarray*}%
We have the following solutions 
\begin{eqnarray*}
&&\left\{ 
\begin{array}{c}
s_{1}\approx 1.43410, \\ 
u_{1}\approx 1.05959,%
\end{array}%
\right. \ \ \ \ \ \ \ \left\{ 
\begin{array}{c}
s_{2}\approx 1.00000, \\ 
u_{2}\approx 1.69156,%
\end{array}%
\right. \ \ \ \ \ \ \ \left\{ 
\begin{array}{c}
s_{3}\approx 0.08804, \\ 
u_{3}\approx 0.66270,%
\end{array}%
\right. \ \ \ \ \ \ \left\{ 
\begin{array}{c}
s_{4}\approx 7.18284, \\ 
u_{4}\approx 0.10233,%
\end{array}%
\right. \\
&&\left\{ 
\begin{array}{c}
s_{5}\approx -0.57007, \\ 
u_{5}\approx 1.026200,%
\end{array}%
\right. \ \ \ \ \ \left\{ 
\begin{array}{c}
s_{6}\approx -1.00000, \\ 
u_{6}\approx 0.767320,%
\end{array}%
\right. \ \ \ \ \ \left\{ 
\begin{array}{c}
s_{7}\approx -2.29022, \\ 
u_{7}\approx 1.935860,%
\end{array}%
\right. \ \ \ \ \ \left\{ 
\begin{array}{c}
s_{8}\approx -1.14653, \\ 
u_{8}\approx 5.513580,%
\end{array}%
\right. \\
&&\left\{ 
\begin{array}{c}
s_{9}\approx -1.00000, \\ 
u_{9}\approx -0.89449,%
\end{array}%
\right. \ \ \ \ \ \left\{ 
\begin{array}{c}
s_{10}\approx -1.02196, \\ 
u_{10}\approx -1.25556,%
\end{array}%
\right. \ \ \ \ \left\{ 
\begin{array}{c}
s_{11}\approx -1.00000, \\ 
u_{11}\approx -1.91450,%
\end{array}%
\right. \ \ \ \left\{ 
\begin{array}{c}
s_{12}\approx -0.01501, \\ 
u_{12}\approx -0.66009,%
\end{array}%
\right. \\
&&\left\{ 
\begin{array}{c}
s_{13}\approx 0.891700, \\ 
u_{13}\approx -0.67707,%
\end{array}%
\right. \ \ \ \ \left\{ 
\begin{array}{c}
s_{14}\approx 1.002780, \\ 
u_{14}\approx -1.37766,%
\end{array}%
\right.
\end{eqnarray*}%
The only point in $\left( 0,1\right) \times \left( 0,1\right) $ is $\left(
s_{3},u_{3}\right) $ and we see that $h_{1}(s_{3},u_{3})=588255.0757.$ All
cases have been dealt with, $\left( \ref{H3.0}\right) $ holds. \medskip

To see that $\left( \ref{H3.0}\right) $ is sharp, consider $f_{\ast }:%
\mathbb{D}\rightarrow 
\mathbb{C}
$ given by 
\begin{equation*}
\left( zf_{\ast }^{\prime }\left( z\right) \right) ^{\prime }=\frac{1+z^{3}}{%
1-z^{3}}.
\end{equation*}%
Then $f_{\ast }\left( z\right) =z+\frac{1}{8}z^{4}+\frac{2}{49}z^{7}+\cdots $
with $a_{2}=a_{3}=0$ and $a_{4}=\dfrac{1}{8},\ a_{5}=0$ which from $\left( %
\ref{in3}\right) -\left( \ref{in4}\right) $ gives $A_{2}=A_{3}=0$ and $%
A_{4}=-\dfrac{1}{8},\ A_{5}=0.$ Now from $\left( \ref{h31}\right) ,$ we have 
$|H_{3}(1)\left( f_{\ast }^{-1}\right) |=\frac{1}{64}.$ This completes the
proof of the theorem.
\end{proof}

\end{document}